\newtheorem{definition}{Definition}[section]
\newtheorem{lemma}[definition]{Lemma}
\newtheorem{theorem}[definition]{Theorem}
\newtheorem{corollary}[definition]{Corollary}
\newtheorem{proposition}[definition]{Proposition}
\newtheorem{con}[definition]{Conjecture}
\newtheorem{remark}[definition]{Remark}
\newcommand{\Romannum}[1]{\uppercase\expandafter{\romannumeral #1}}
\numberwithin{equation}{section}
\newcommand\keywordsname{Key words}
\newcommand\AMSname{AMS subject classifications}
\newenvironment{@abssec}[1]{%
     \if@twocolumn
       \section*{#1}%
     \else
       \vspace{.05in}\footnotesize
       \parindent .2in
         {\upshape\bfseries #1. }\ignorespaces
     \fi}
     {\if@twocolumn\else\par\vspace{.1in}\fi}
\begin{document}

\title{A conjecture on the primitive degree  of Tensors
\footnote{P. Yuan's research is supported by the NSF of China (Grant No. 11271142) and
the Guangdong Provincial Natural Science Foundation(Grant No. S2012010009942),
L. You's research is supported by the Zhujiang Technology New Star Foundation of
Guangzhou (Grant No. 2011J2200090) and Program on International Cooperation and Innovation, Department of Education,
Guangdong Province (Grant No.2012gjhz0007).}}
\author{Pingzhi Yuan\footnote{{\it{Corresponding author:\;}}yuanpz@scnu.edu.cn.}, Zilong He,  Lihua You \footnote{{\it{Email address:\;}}ylhua@scnu.edu.cn.}}
\vskip.2cm
\date{{\small
School of Mathematical Sciences, South China Normal University,\\
Guangzhou, 510631, P.R. China\\
}}
\maketitle

\begin{abstract} In this paper, we prove: Let $\mathbb{A}$ be a nonnegative primitive tensor with order $m$ and dimension $n$. Then its primitive degree $\gamma(\mathbb{A})\le (n-1)^2+1$, and the upper bound is sharp. This confirms a conjecture  of Shao \cite{Sh12}.
\vskip.2cm \noindent{\it{AMS classification:}} 05C50;  15A69
 \vskip.2cm \noindent{\it{Keywords:}}  tensor; product; primitive tensor; primitive degree.
\end{abstract}

\section{Introduction}
\hskip.6cm In \cite{Ch1} and \cite{Ch2}, Chang et al investigated the properties of the spectra of nonnegative tensors. They defined the irreducibility of tensors, and the primitivity of nonnegative tensors, and extended many important properties of primitive matrices to primitive tensors. Recently, as an application of the general tensor product defined by Shao \cite{Sh12}, Shao presented a simple characterization of the primitive tensors in terms of the zero pattern of the powers of  $\mathbb{A}$. He also proposed the following conjecture on the primitive degree.

\begin{con} When $m$ is fixed, then there exists some polynomial $f(n)$ on $n$ such that $\gamma(\mathbb{A})\le f(n)$ for all nonnegative primitive tensors of order $m$ and dimension $n$. \end{con}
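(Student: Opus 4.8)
I will prove the sharp form $\gamma(\mathbb{A})\le(n-1)^2+1$, with equality attainable, which in particular confirms the conjecture. Since primitivity and the primitive degree depend only on the zero--nonzero pattern of $\mathbb{A}$, and that pattern evolves under the general product in a purely combinatorial way, the plan is to pass at once to the $0$--$1$ pattern of $\mathbb{A}$ and recast $\gamma(\mathbb{A})$ graph-theoretically. For $i,j\in\{1,\dots,n\}$ and $k\ge1$, call a complete rooted $(m-1)$-ary tree of height $k$ an \emph{$(i,j)$-chain of length $k$} if its root carries label $i$, every internal vertex with label $u$ and children labelled $u_2,\dots,u_m$ satisfies $a_{uu_2\cdots u_m}\ne0$, and all $(m-1)^k$ leaves carry label $j$. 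Unwinding $\mathbb{A}^{k}=\mathbb{A}\cdot\mathbb{A}^{k-1}$ for the general product shows that the $(i,j,\dots,j)$-entry of $\mathbb{A}^{k}$ is nonzero exactly when an $(i,j)$-chain of length $k$ exists; together with Shao's characterization of primitivity through the zero pattern of powers, this makes $\gamma(\mathbb{A})$ the least $k$ admitting $(i,j)$-chains of length $k$ for every ordered pair $(i,j)$. Write $V_{ij}$ for the set of lengths of $(i,j)$-chains.

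The engine is grafting: inserting a $(u,j)$-chain of length $\ell$ at each leaf of an $(i,u)$-chain of length $k$ produces an $(i,j)$-chain of length $k+\ell$, so $V_{iu}+V_{uj}\subseteq V_{ij}$; in particular each $V_{ii}$ is closed under addition, and $V_{ij}\supseteq v+V_{ii}$ for every $v\in V_{ij}$. Primitivity provides a $K$ with $V_{ij}\supseteq\{K,K+1,K+2,\dots\}$ for all $i,j$, so each $V_{ii}$ is a subsemigroup of $\mathbb{Z}_{\ge 1}$ with finite complement; hence $\gcd V_{ii}=1$, and once $V_{ii}$ contains coprime $a<b$ it contains every integer $\ge(a-1)(b-1)$ by Sylvester--Frobenius. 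The proof is thus reduced to locating, near each pair $(i,j)$, short coprime return lengths in $V_{ii}$ (or $V_{jj}$) and short connecting lengths in $V_{ij}$.

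The associated digraph $D(\mathbb{A})$ --- vertices $\{1,\dots,n\}$, arc $u\to v$ iff $v$ labels a child of some nonzero entry at $u$ --- is strongly connected because $\mathbb{A}$ is irreducible, and primitivity of $\mathbb{A}$ also forces the gcd of its cycle lengths to be $1$ (the same residue argument that shows a primitive matrix has an aperiodic digraph), so $D(\mathbb{A})$ is a primitive digraph and therefore contains a cycle of length $s\le n-1$ (a primitive $n$-vertex digraph cannot have girth $n$). The aim is to carry out Wielandt's estimate ``exponent $\le n+s(n-2)$'' with chains in place of walks, and I expect \emph{this} to be the real obstacle: a closed walk at a vertex is not a closed chain. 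If the ``constant'' majorization matrix $M$ of $\mathbb{A}$ ($M_{uv}=1$ iff $a_{uv\cdots v}\ne0$) is itself primitive one may simply quote Wielandt for $M$ and get $\gamma(\mathbb{A})\le\gamma(M)\le(n-1)^2+1$; but $M$ can be reducible while $\mathbb{A}$ is primitive, and then the short chains must be assembled using non-constant entries, which forces one to supply, at every off-spine branch of a prospective chain, an auxiliary chain of the correct smaller length that routes that child back to the spine. The plan is to manufacture these auxiliary chains by descending induction on the remaining height, using the reachability already guaranteed by primitivity (and, if necessary, an induction on $n$ after restricting to a strongly connected sub-pattern), organised so that no substitution raises the total height; once the short cycle of $D(\mathbb{A})$ and the short connecting walks have been upgraded to chains in this way, Wielandt's pumping argument applies and yields $\gamma(\mathbb{A})\le n+s(n-2)\le(n-1)^2+1$.

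Sharpness (for $n\ge2$; the bound is trivial for $n=1$) is witnessed by the $0$--$1$ tensor of order $m$ whose only nonzero entries are $a_{i,i+1,\dots,i+1}=1$ for $1\le i\le n-1$, together with $a_{n,1,\dots,1}=1$ and $a_{n,2,\dots,2}=1$. Every nonzero entry has all of its last $m-1$ indices equal, so here a chain collapses to a walk: an $(i,j)$-chain of length $k$ exists iff there is a walk $i\to j$ of length $k$ in the Wielandt digraph $W_n$ (the cycle $1\to2\to\cdots\to n\to1$ together with the extra arc $n\to2$). Hence $\mathbb{A}$ is primitive and $\gamma(\mathbb{A})=\exp(W_n)=(n-1)^2+1$, so the upper bound cannot be lowered.
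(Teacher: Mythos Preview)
Your chain/grafting framework and the sharpness example are correct and coincide with the paper's. The genuine gap is exactly where you flag it: in the case where the majorization matrix $M=M(\mathbb{A})$ is not primitive, your ``plan'' to upgrade walks in $D(\mathbb{A})$ to chains is not a proof, and the obstruction is structural, not merely technical. A step of a walk in $D(\mathbb{A})$ may use an entry $a_{u\,u_2\cdots u_m}$ with the $u_l$ not all equal; to promote this to one level of an $(u,j)$-chain of height $k$ you must supply, at \emph{every} off-spine child $u_l$, an $(u_l,j)$-chain of height exactly $k-1$. But the existence of chains of prescribed height is precisely the statement you are trying to establish, so the ``descending induction on the remaining height'' is circular unless those auxiliary chains are produced by an independent mechanism, and nothing in the sketch supplies one. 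Restricting to a strongly connected subpattern does not help either, since the off-spine children can land on any vertex of $[n]$.

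The paper bypasses this difficulty by never leaving $M(\mathbb{A})$. A walk $j_1\to\cdots\to j_t$ in $M(\mathbb{A})$ uses only entries $a_{j_{k+1}j_k\cdots j_k}$, whose $m-1$ children are identical, so it \emph{is} already a $(j_t,j_1)$-chain with no off-spine branches. The point you are missing is that $M(\mathbb{A})$, while possibly not primitive or even irreducible, still carries enough structure whenever $\mathbb{A}$ is primitive: every column of $M(\mathbb{A})$ has an off-diagonal nonzero entry and some column has two, which forces $M(\mathbb{A})$ to contain a cycle of length $t\le n-1$. The paper then proves directly that if $(M(\mathbb{A}))_{jj}>0$ the sets $S_k=\{u:(M(\mathbb{A}^k))_{uj}>0\}$ strictly increase until they reach $[n]$, giving $\gamma_j(\mathbb{A})\le n-1$; crucially, the implication $S_l=S_{l+1}\Rightarrow S_l=[n]$ uses primitivity of $\mathbb{A}$ (through the recursion for $M(\mathbb{A}^{k+1})$ in terms of $M(\mathbb{A}^{k})$), not of $M(\mathbb{A})$. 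Applying this to $\mathbb{A}^{t}$ for vertices $j$ on the short $M(\mathbb{A})$-cycle, and reaching the remaining vertices by a short $M(\mathbb{A})$-walk into that cycle set, yields $\gamma(\mathbb{A})\le(n-1)^2+1$. In short: route everything through constant-children entries so that walks are chains automatically, and let the primitivity of the tensor, rather than of $M(\mathbb{A})$, close the argument.
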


In this paper, we confirm the conjecture by proving the following theorem.

\begin{theorem}\label{thmm} Let $\mathbb{A}$ be a nonnegative primitive tensor with order $m$ and dimension $n$. Then its primitive degree $\gamma(\mathbb{A})\le (n-1)^2+1$, and the upper bound is sharp. \end{theorem}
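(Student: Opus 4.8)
The plan is to combine Shao's combinatorial description of primitivity with a Wielandt-type exponent estimate. First I would recall the relevant facts from \cite{Sh12}: $\mathbb{A}$ is primitive if and only if some power $\mathbb{A}^k$ has all entries positive, and $\gamma(\mathbb{A})$ is then the least such $k$; moreover a primitive tensor has no zero slice, so $\mathbb{A}^k>0$ implies $\mathbb{A}^{k+1}=\mathbb{A}\cdot\mathbb{A}^k>0$ as well. Hence it suffices to prove $\mathbb{A}^{(n-1)^2+1}>0$. Unwinding the iterated general product, $(\mathbb{A}^k)_{i\alpha}>0$ holds exactly when the support of $\mathbb{A}$ admits a labelled complete $(m-1)$-ary tree of depth $k$ whose root carries $i$ and whose $(m-1)^k$ leaves spell the word $\alpha$. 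I would work entirely with this ``tree-reachability'', tracking for each $k$ the set $V_k$ of vertices $i$ from which every leaf-word of length $(m-1)^k$ is reachable in $k$ steps; the goal becomes $V_{(n-1)^2+1}=\{1,\dots,n\}$.

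The heart of the proof is to show that the growth of the $V_k$ is governed, just as for powers of a primitive matrix, by a primitive digraph on $n$ vertices. Let $D=D(\mathbb{A})$ have vertex set $\{1,\dots,n\}$ and an arc $i\to j$ whenever $j$ occurs in some nonzero tuple $(i_2,\dots,i_m)$ of the $i$-th slice. Taking $\alpha$ constant shows that $\mathbb{A}^k>0$ forces $D$ to be primitive, and the substance is the converse with a bound: once $\mathbb{A}$ is primitive, a vertex whose ``slice-closure'' already spans all tuples remains saturated, and from the first moment a vertex becomes saturated this property propagates backward along arcs of $D$; pairing the time to first saturation with the cycle/diameter structure of $D$ yields $\gamma(\mathbb{A})\le(n-1)^2+1$. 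The constant is exactly Wielandt's: a strongly connected digraph on $n$ vertices has a shortest cycle of length $s\le n$ and a path of length $\le n-1$ between any ordered pair of vertices, and the coprimality of cycle lengths (via the Frobenius bound $(s-1)(t-1)$ for coprime $s,t$), refined in the extremal case $s=n$, produces $n^2-2n+2$.

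I expect the reduction in the second paragraph to be the main obstacle: a tensor power is more rigid than a matrix power, since the leaves below a node are constrained by a \emph{product} of the reachable sets of its children rather than merely by their union, so one must show that the primitivity hypothesis really does force tuple-reachability to catch up with ordinary walk-reachability within Wielandt's bound instead of lagging behind it. (That this is not automatic is visible from degenerate examples such as ``diagonal'' tensors, which are never primitive when $m\ge3$.) Finally, for sharpness I would construct, for each $n$ and $m$, a primitive tensor $\mathbb{A}$ of order $m$ and dimension $n$ with $\gamma(\mathbb{A})=(n-1)^2+1$: the natural source is the extremal Wielandt matrix of dimension $n$, but because a diagonal or otherwise degenerate lifting fails to be primitive for $m\ge3$, the Wielandt pattern must be spread across the slices of $\mathbb{A}$ so that primitivity is preserved, after which a direct computation shows $\mathbb{A}^{(n-1)^2}$ still has a zero entry while $\mathbb{A}^{(n-1)^2+1}>0$.
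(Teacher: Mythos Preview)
Your proposal rests on a mis-statement of Shao's characterization. Primitivity of $\mathbb{A}$ is \emph{not} equivalent to some power $\mathbb{A}^k$ being entrywise positive; it is equivalent to some $\mathbb{A}^k$ being \emph{essentially positive}, i.e.\ $(M(\mathbb{A}^k))_{ij}=(\mathbb{A}^k)_{ij\cdots j}>0$ for all $i,j$ (Proposition~\ref{Pro24} and Remark~\ref{rem1}). These are different: only the entries indexed by constant words $j\cdots j$ need to become positive. Consequently your goal ``$\mathbb{A}^{(n-1)^2+1}>0$'' is in general false for primitive tensors, and the tree-reachability sets $V_k$ you propose to track (reachability to \emph{every} leaf-word) are the wrong invariants. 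The paper instead tracks, for each fixed $j$, the set $S_k=\{u:(M(\mathbb{A}^k))_{uj}>0\}$, governed by the recursion
\[
(M(\mathbb{A}^{k+1}))_{uj}=\sum_{j_2,\dots,j_m} a_{uj_2\cdots j_m}\,(M(\mathbb{A}^k))_{j_2j}\cdots(M(\mathbb{A}^k))_{j_mj},
\]
and obtains the bound via cycles in the majorization matrix $M(\mathbb{A})$ rather than in your digraph $D$.

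The same misunderstanding invalidates your sharpness discussion. The ``diagonal'' lifting of the Wielandt matrix (set $a_{ii_2\cdots i_m}=0$ unless $i_2=\cdots=i_m$, and $M(\mathbb{A})=M_1$) \emph{is} primitive for every $m\ge 2$: by Lemma~\ref{lem33} one has $Z(M(\mathbb{A}^r))=Z(M_1^{\,r})$, so $\gamma(\mathbb{A})=\gamma(M_1)=(n-1)^2+1$. Yet no power of this tensor is entrywise positive, since every nonzero entry of $\mathbb{A}^r$ is indexed by a constant word. So the very example the paper uses for sharpness is a counterexample to your claimed equivalence, and there is no need to ``spread the Wielandt pattern across the slices'' to obtain a primitive extremal tensor.
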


\section{Preliminaries}
\hskip.6cm An order $m$ dimension $n$ tensor $\mathbb{A}= (a_{i_1i_2\cdots i_m})_{1\le i_j\le n \hskip.2cm (j=1, \cdots, m)}$ over the complex field $\mathbb{C}$ is a multidimensional array with all entries $a_{i_1i_2\cdots i_m}\in\mathbb{C}\, ( i_1, \cdots, i_m\in [n]=\{1, \cdots, n\})$. The majorization matrix  $M(\mathbb{A})$ of the tensor $\mathbb{A}$ is defined as  $(M(\mathbb{A}))_{ij}=
a_{ij\cdots j}, (i, j\in[n])$ by Pearson \cite{Pe10}.

 Let $\mathbb{A}$ (and $\mathbb{B}$) be an order $m\ge2$ (and $k\ge 1$), dimension $n$ tensor, respectively. Recently, Shao \cite{Sh12} defined a general product  $\mathbb{A}\mathbb{B}$ to be the following tensor $\mathbb{D}$ of order $(m-1)(k-1)+1$ and dimension $n$:
$$ d_{i\alpha_1\cdots\alpha_{m-1}}=\sum\limits_{i_2, \cdots, i_m=1}^na_{ii_2\cdots i_m}b_{i_2\alpha_1}\cdots b_{i_m\alpha_{m-1}} \quad (i\in[n], \, \alpha_1, \cdots, \alpha_{m-1}\in[n]^{k-1}).$$

The tensor product  possesses a very useful property: the associative law (\cite{Sh12}, Theorem 1.1).  
With the general product,  Shao \cite{Sh12} proved some results on the primitivity and primitive degree f nonnegative tensor. 
 The following result will be used in Definition \ref{defn22}.

\begin{proposition}{\rm(\cite{Sh12}, Proposition 4.1)} \label{Pro21} 
Let $\mathbb{A}$ be an order $m$ and dimension $n$ nonnegative  tensor. Then the following three conditions are equivalent:

(1). For any $i, j\in[n], a_{ij\cdots j}>0$ holds.

(2). For any $j\in[n],  \mathbb{A}e_j>0$ holds (where $e_j$ is the $j^{th}$ column of the identity matrix $I_n$).

(3). For any nonnegative nonzero vector $x\in \mathbb{R}^n, \mathbb{A}x>0$ holds. \end{proposition}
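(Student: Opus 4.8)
The plan is to unwind the tensor--vector product and then to verify the equivalences by direct computation, since a vector $x\in\mathbb{R}^n$ is simply an order $1$, dimension $n$ tensor. Specializing Shao's general product to the case $k=1$, the result $\mathbb{A}x$ is an order $1$ tensor (a vector) whose $i$-th entry is
$$(\mathbb{A}x)_i=\sum_{i_2,\ldots,i_m=1}^{n}a_{ii_2\cdots i_m}\,x_{i_2}\cdots x_{i_m}\qquad(i\in[n]).$$
I would establish the equivalences by proving $(1)\Leftrightarrow(2)$, then the easy specialization $(3)\Rightarrow(2)$, and finally the main implication $(2)\Rightarrow(3)$; together these close the loop.

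For $(1)\Leftrightarrow(2)$, I substitute $x=e_j$ into the displayed formula. Since $(e_j)_k=\delta_{kj}$, the product $x_{i_2}\cdots x_{i_m}$ vanishes unless $i_2=\cdots=i_m=j$, so the sum collapses to the single surviving term and $(\mathbb{A}e_j)_i=a_{ij\cdots j}$. Thus the assertion $\mathbb{A}e_j>0$ for every $j$ says precisely that $a_{ij\cdots j}>0$ for all $i,j\in[n]$, which is exactly condition $(1)$; the two statements are literally the same, giving the equivalence. The implication $(3)\Rightarrow(2)$ is then immediate: each $e_j$ is a nonnegative nonzero vector, so applying $(3)$ to $x=e_j$ yields $\mathbb{A}e_j>0$.

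The substantive step is $(2)\Rightarrow(3)$. Assuming $(2)$ (equivalently $(1)$), let $x$ be any nonnegative nonzero vector and fix an index $j$ with $x_j>0$. In the expansion of $(\mathbb{A}x)_i$ above, every summand $a_{ii_2\cdots i_m}\,x_{i_2}\cdots x_{i_m}$ is nonnegative because both $\mathbb{A}$ and $x$ are nonnegative. Isolating the single term with $i_2=\cdots=i_m=j$ gives the lower bound
$$(\mathbb{A}x)_i\ \ge\ a_{ij\cdots j}\,x_j^{\,m-1}\ >\ 0,$$
since $a_{ij\cdots j}>0$ by $(1)$ and $x_j>0$. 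As this holds for every $i\in[n]$, we conclude $\mathbb{A}x>0$, which is $(3)$.

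The only real care needed is in correctly reading off the tensor--vector product as the specialization of the general product to an order $1$ factor; once the formula $(\mathbb{A}x)_i=\sum a_{ii_2\cdots i_m}x_{i_2}\cdots x_{i_m}$ is in hand, each implication reduces to a one-line nonnegativity argument, so I do not anticipate a genuine obstacle beyond this bookkeeping.
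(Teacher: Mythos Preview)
Your proof is correct. The paper does not actually supply a proof of this proposition; it is quoted from \cite{Sh12} and used without argument, so there is no in-paper proof to compare against. Your direct computation of $(\mathbb{A}e_j)_i=a_{ij\cdots j}$ and the lower bound $(\mathbb{A}x)_i\ge a_{ij\cdots j}x_j^{\,m-1}$ for a coordinate $j$ with $x_j>0$ is the standard (and essentially only) way to establish the equivalences.
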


\begin{definition} {\rm (\cite{Pe10}, Definition 3.1)} \label{defn220}A nonnegative tensor $\mathbb{A}$ is  called essentially positive, if it satisfies (3) of Proposition \ref{Pro21}.\end{definition}

By Proposition \ref{Pro21}, the following Definition \ref{defn22} is eauivalent to Definition \ref{defn220}.

\begin{definition} {\rm (\cite{Sh12}, Definition 4.1)} \label{defn22}
A nonnegative tensor $\mathbb{A}$ is  called essentially positive, if it satisfies one of the three conditions in Proposition \ref{Pro21}.\end{definition}

In \cite{Ch2} and \cite{Pe10}, Chang et al and Pearson define the primitive tensors as follows.

\begin{definition} {\rm (\cite{Ch2, Pe102}) \label{defn23}} Let $\mathbb{A}$ be a nonnegative  tensor with order $m$ and dimension $n$,
$x=(x_1, x_2, \cdots, x_n)^T\in\mathbb{R}^n$ a vector and $x^{[r]}=(x_1^r, x_2^r, \cdots, x_n^r)^T$. Define the map $T_\mathbb{A}$ from $\mathbb{R}^n$ to $\mathbb{R}^n$ as: $T_\mathbb{A}(x)=(\mathbb{A}x)^{[\frac{1}{m-1}]}$. If there exists some positive integer $r$ such that $T_\mathbb{A}^r(x)>0$ for all nonnegative nonzero vectors $x\in\mathbb{R}^n$, then $\mathbb{A}$ is called primitive and the smallest such integer $r$ is called the primitive degree of $\mathbb{A}$, denoted by $\gamma(\mathbb{A})$. \end{definition}

In \cite{Sh12}, Shao show the following results and define the primitive degree by using the properties of tensor product and the zero patterns.

\begin{proposition}\label{Pro24}{\rm (\cite{Sh12}, Theorem 4.1)}
A nonnegative tensor $\mathbb{A}$ is primitive if and only if there exists some positive integer $r$ such that $\mathbb{A}^r$ is essentially positive.
Furthermore, the smallest such $r$ is the primitive degree of $\mathbb{A}$.
\end{proposition}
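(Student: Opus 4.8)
The plan is to reduce the dynamical statement about the map $T_\mathbb{A}$ to a purely combinatorial statement about the zero patterns of the tensors $\mathbb{A}^r$, and then to invoke Proposition \ref{Pro21}. The starting observation is that for a nonnegative vector $y$, the coordinatewise power $y^{[\frac{1}{m-1}]}$ has exactly the same set of positive coordinates as $y$ itself; hence $y\mapsto y^{[\frac{1}{m-1}]}$ preserves the zero pattern on the nonnegative orthant, and in particular the zero pattern of $T_\mathbb{A}(x)=(\mathbb{A}x)^{[\frac{1}{m-1}]}$ coincides with that of $\mathbb{A}x$.

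The key lemma I would isolate is a zero-pattern invariance for the product of a nonnegative tensor with a vector: if $\mathbb{C}$ is any nonnegative tensor of order $p$ and dimension $n$, and $y,y'$ are nonnegative vectors with the same zero pattern, then $\mathbb{C}y$ and $\mathbb{C}y'$ have the same zero pattern. This is immediate from the defining formula, since each coordinate $(\mathbb{C}y)_i=\sum c_{ii_2\cdots i_p}y_{i_2}\cdots y_{i_p}$ is a sum of nonnegative terms, so it is positive precisely when some term with $c_{ii_2\cdots i_p}>0$ has all of $y_{i_2},\ldots,y_{i_p}$ positive, a condition that depends only on which coordinates of $y$ are positive.

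With these two facts I would prove, by induction on $r$, that for every nonnegative vector $x$ the zero pattern of $T_\mathbb{A}^r(x)$ equals the zero pattern of $\mathbb{A}^r x$. The base case $r=1$ is the observation above. For the inductive step, write $T_\mathbb{A}^r(x)=\bigl(\mathbb{A}\,T_\mathbb{A}^{r-1}(x)\bigr)^{[\frac{1}{m-1}]}$; by the coordinatewise-power remark this has the zero pattern of $\mathbb{A}\,T_\mathbb{A}^{r-1}(x)$, by the induction hypothesis together with the invariance lemma this equals the zero pattern of $\mathbb{A}(\mathbb{A}^{r-1}x)$, and by the associative law (\cite{Sh12}, Theorem 1.1) we have $\mathbb{A}(\mathbb{A}^{r-1}x)=\mathbb{A}^r x$. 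One must check at each stage that the orders match so the general products are defined: $\mathbb{A}^{r-1}$ has order $(m-1)^{r-1}+1$, so $\mathbb{A}^{r-1}x$ is again a vector, and $\mathbb{A}$ applied to it is well defined.

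Finally I would read off the conclusion. Since $T_\mathbb{A}^r(x)>0$ holds for a nonnegative nonzero $x$ exactly when $\mathbb{A}^r x>0$, the tensor $\mathbb{A}$ is primitive (that is, some $T_\mathbb{A}^r$ is strictly positive on all nonnegative nonzero vectors) if and only if $\mathbb{A}^r x>0$ for all such $x$ for some $r$, which by condition (3) of Proposition \ref{Pro21} is exactly the assertion that $\mathbb{A}^r$ is essentially positive. Because the zero patterns agree for each individual $r$, the least $r$ making $\mathbb{A}^r$ essentially positive is the least $r$ making $T_\mathbb{A}^r$ strictly positive, namely $\gamma(\mathbb{A})$. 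The main obstacle is the bookkeeping in the inductive step: correctly invoking associativity to turn iteration of $T_\mathbb{A}$ into a single power $\mathbb{A}^r$ while keeping track of the changing tensor orders; everything else is a direct consequence of nonnegativity.
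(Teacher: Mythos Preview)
The paper does not supply its own proof of this proposition; it is simply quoted from \cite{Sh12} (Theorem 4.1) and used as a black box. There is therefore nothing in the present paper to compare your argument against.

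That said, your proof is correct and is essentially the natural argument: the coordinatewise root preserves zero patterns on the nonnegative orthant, the zero pattern of $\mathbb{C}y$ depends only on the zero pattern of $y$ when $\mathbb{C}$ and $y$ are nonnegative, and associativity of Shao's product lets you collapse $\mathbb{A}(\mathbb{A}^{r-1}x)$ to $\mathbb{A}^r x$. The induction and the order bookkeeping are handled correctly, and the final identification with condition (3) of Proposition~\ref{Pro21} is exactly what is needed. This is, in spirit, the argument Shao gives in \cite{Sh12}.
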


\begin{remark}\label{rem1}
Let $\mathbb{A}$ be a nonnegative  tensor with order $m$ and dimension $n$. Then $\mathbb{A}$ is primitive if and only if there exists some positive integer $r$ such that $M(\mathbb{A}^r)>0.$
\end{remark}

Now we prove the following necessary conditions for a tensor to be  primitive firstly.

\begin{proposition}\label{Pro25}Let $\mathbb{A}$ be a nonnegative primitive tensor with order $m$ and dimension $n$, $M(\mathbb{A})$ the majorization matrix of $\mathbb{A}$. Then we have:

{\rm (i). } For each $j\in[n]$, there exists an integer $i\in[n]\backslash\{j\}$ such that $(M(\mathbb{A}))_{ij}>0$.

{\rm (ii). } There exist some $j\in[n]$ and integers $u, v$ with $1\le u< v\le n$ such that  $(M(\mathbb{A}))_{uj}>0$ and $(M(\mathbb{A}))_{vj}>0$.\end{proposition}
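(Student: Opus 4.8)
The plan is to argue by contradiction, using the characterization of primitivity recorded in Remark~\ref{rem1}: if $\mathbb{A}$ is primitive, then there is a positive integer $r$ with $M(\mathbb{A}^r)>0$, and hence $\mathbb{A}^r e_j>0$ for every $j\in[n]$. The bridge between the majorization matrix and the action of a tensor on vectors is the identity $\mathbb{B}e_j = (\text{the } j\text{th column of } M(\mathbb{B}))$, valid for every nonnegative tensor $\mathbb{B}$, since $(\mathbb{B}e_j)_i = b_{ij\cdots j} = (M(\mathbb{B}))_{ij}$. Two further elementary facts I would record first are the homogeneity relation $\mathbb{A}(cy) = c^{m-1}(\mathbb{A}y)$ for any scalar $c\ge 0$ and any nonnegative $y$ (so in particular $\mathbb{A}\cdot 0 = 0$), and, via the associative law (\cite{Sh12}, Theorem 1.1), the recursion $\mathbb{A}^{r+1}e_j = \mathbb{A}(\mathbb{A}^r e_j)$. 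Together these say that once $\mathbb{A}^s e_j$ is either $0$ or a scalar multiple of a single basis vector $e_k$, the same remains true for all larger exponents, and one can track which basis vector occurs.

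For part (i), I would suppose for contradiction that some column $j$ of $M(\mathbb{A})$ has all its off-diagonal entries zero; then $\mathbb{A}e_j = c\,e_j$ with $c=(M(\mathbb{A}))_{jj}\ge 0$, and iterating the recursion with the homogeneity relation shows that $\mathbb{A}^r e_j$ is a (possibly zero) scalar multiple of $e_j$ for every $r$, hence never strictly positive when $n\ge 2$ — contradicting $M(\mathbb{A}^r)>0$. For part (ii), I would suppose that every column of $M(\mathbb{A})$ has at most one positive entry. If some column $j$ is the zero column, then $\mathbb{A}e_j=0$, so $\mathbb{A}^r e_j=0$ for all $r$, a contradiction; otherwise each column $j$ has exactly one positive entry, in row $\tau(j)$, which defines a map $\tau\colon[n]\to[n]$ with $\mathbb{A}e_j = c_j e_{\tau(j)}$ and $c_j>0$. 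An induction on $r$, using the recursion together with $\mathbb{A}(c\,e_k) = c^{m-1}\mathbb{A}e_k = c^{m-1}c_k\,e_{\tau(k)}$, then gives $\mathbb{A}^r e_j = (\text{a positive scalar})\cdot e_{\tau^r(j)}$ for every $r$, again never strictly positive for $n\ge 2$ — a contradiction with primitivity. (One assumes $n\ge 2$ throughout, the case $n=1$ being trivial.)

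All the computations involved are routine, so I do not anticipate a genuine obstacle; the one point needing care is that the homogeneity relation and the associative law must be combined correctly so that the property ``being a scalar multiple of a single basis vector'' is truly preserved under left-multiplication by $\mathbb{A}$ — this is precisely where $\mathbb{A}(c\,e_k)=c^{m-1}\mathbb{A}e_k$ enters each induction. The substantive content is just the translation of ``primitive'' into a statement about $M(\mathbb{A}^r)$ via Remark~\ref{rem1}, after which both assertions follow from the observation that a column of $M(\mathbb{A})$ with at most one nonzero entry forces the corresponding column of every power of $\mathbb{A}$ to have at most one nonzero entry.
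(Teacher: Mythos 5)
Your argument is correct and is essentially the paper's own proof: both argue by contradiction and show by induction on the power $r$ that a column of $M(\mathbb{A})$ with at most one nonzero entry forces the $j$-th column of $M(\mathbb{A}^r)$ to have at most one nonzero entry for every $r$, contradicting primitivity via $M(\mathbb{A}^r)>0$. Your vector formulation, $\mathbb{A}^{r+1}e_j=\mathbb{A}(\mathbb{A}^r e_j)$ combined with the homogeneity $\mathbb{A}(c\,y)=c^{m-1}\mathbb{A}y$, is exactly the entrywise recursion (\ref{eqm1}) used in the paper, so the two proofs coincide up to notation.
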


\begin{proof} We prove the results via contradiction.

(i) To obtain a contradiction, we suppose that there exists some integer $j\in [n]$ such that $(M(\mathbb{A}))_{ij}=0$ for every $i\in[n]\backslash\{j\}$.
By definitions of the tensor product and the majorization matrix,  for any $u\in[n]\backslash\{j\}$, we have $(M(\mathbb{A}))_{uj}=0$ and

$(M(\mathbb{A}^2))_{uj}= \sum\limits_{j_2,\cdots, j_m=1}^{n}a_{uj_2\cdots j_m}a_{j_2j\cdots j}\cdots a_{j_mj\cdots j}$

\hskip2.0cm $=\sum\limits_{j_2,\cdots, j_m=1}^{n}a_{uj_2\cdots j_m}(M(\mathbb{A}))_{j_2j}\cdots(M(\mathbb{A}))_{j_mj}$

\hskip2.0cm $=(M(\mathbb{A}))_{uj}(M(\mathbb{A}))_{jj}^{m-1}=0$.

Since
$$(M(\mathbb{A}^{r+1}))_{uj}=\sum\limits_{j_2,\cdots, j_m=1}^{n}a_{uj_2\cdots j_m}(M(\mathbb{A}^r))_{j_2j}\cdots(M(\mathbb{A}^r))_{j_mj},$$hence, by induction on $k$, we conclude that
$$(M(\mathbb{A}^k))_{uj}=0$$ holds for any positive integer $k$ and any $u\in[n]\backslash\{j\}$. This contradicts that $(M(\mathbb{A}^{\gamma(\mathbb{A})}))_{uj}>0$, where $\gamma(\mathbb{A})$ is the primitive degree of $\mathbb{A}$. (i) is proved.

(ii) Suppose, to derive a contradiction, that there is at most one nonzero element in each of the following $n$ sets
$$\{(M(\mathbb{A}))_{uj}, u\in[n]\}, \quad j\in[n].$$
Now we will show that for any positive integer $t$, there is at most one nonzero element in each of the following $n$ sets
$$\{(M(\mathbb{A}^t))_{uj}, u\in[n]\}, \quad j\in[n].$$

We prove the above assertion by induction on $t$. Clearly, $t=1$ is obvious. Assume that the assertion holds for $t=k\ge1$, that is, there is at most one nonzero element in each of the following $n$ sets
$$\{(M(\mathbb{A}^k))_{uj}, u\in[n]\}, \quad j\in[n],$$
say, for any $j\in[n]$ and any $u\ne u_j$,  $(M(\mathbb{A}^k))_{uj}=0$. Note that for any $v\in [n]$,
$$(M(\mathbb{A}^{k+1}))_{vj}=\sum\limits_{j_2,\cdots, j_m=1}^{n}a_{vj_2\cdots j_m}(M(\mathbb{A}^k))_{j_2j}\cdots(M(\mathbb{A}^k))_{j_mj}
=a_{vu_j\cdots u_j}((M(\mathbb{A}^k))_{u_jj})^{m-1},$$
by the assumption, there is at most one $v\in[n]$ such that $a_{vu_j\cdots u_j}=(M(A))_{vu_j}>0$, therefore we have prove the assertion. It follows that $\mathbb{A}$ is not a primitive tensor, this contradiction proves (ii).\end{proof}

Follows from the proof of Proposition \ref{Pro25}, we know that
\begin{equation}\label{eqm1} (M(\mathbb{A}^{k+1}))_{uj}=\sum\limits_{j_2,\cdots, j_m=1}^{n}a_{uj_2\cdots j_m}(M(\mathbb{A}^k))_{j_2j}\cdots(M(\mathbb{A}^k))_{j_mj},\end{equation}
is useful and used repeatly.

 By Equation (\ref{eqm1}), it is easy to prove the following assertions.

\begin{corollary}\label{Cor26} Let $\mathbb{A}$ be a nonnegative primitive tensor with order $m$ and dimension $n$. Then we have

{\rm (i). } For each $u\in[n]$, there is at least one index $j_2\cdots j_m\in[n]^{m-1}$ such that $a_{uj_2\cdots j_m}>0$.

{\rm (ii). } Let $k\in[n]$ be fixed. Suppose that $T$ is a positive integer such that $(M(\mathbb{A}^T))_{uk}>0$ for all $u\in[n]$, then for any $t\ge T$, we have $(M(\mathbb{A}^t))_{uk}>0$ for all $u\in[n]$.\end{corollary}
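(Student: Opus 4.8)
The plan is to derive both assertions directly from the recursion in Equation~(\ref{eqm1}), together with Remark~\ref{rem1} and the nonnegativity of $\mathbb{A}$; no new machinery is needed beyond what the proof of Proposition~\ref{Pro25} already supplies.

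For (i), I would argue by contradiction. Suppose some row index $u\in[n]$ satisfies $a_{uj_2\cdots j_m}=0$ for every multi-index $j_2\cdots j_m\in[n]^{m-1}$. Taking $j_2=\cdots=j_m=j$ shows in particular that $(M(\mathbb{A}))_{uj}=a_{uj\cdots j}=0$ for every $j\in[n]$. Now observe that in Equation~(\ref{eqm1}) the quantity $(M(\mathbb{A}^{k+1}))_{uj}$ is a sum whose every summand carries the factor $a_{uj_2\cdots j_m}$; since all of these vanish, we get $(M(\mathbb{A}^{k+1}))_{uj}=0$ for all $k\ge1$ and all $j\in[n]$, independently of the entries of $M(\mathbb{A}^k)$. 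Combined with the vanishing of the $u$-th row of $M(\mathbb{A})$ itself, this means the $u$-th row of $M(\mathbb{A}^k)$ is zero for every $k\ge1$. Hence no power satisfies $M(\mathbb{A}^k)>0$, contradicting primitivity via Remark~\ref{rem1}, and (i) follows.

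For (ii), I would proceed by induction on $t\ge T$, the base case $t=T$ being exactly the hypothesis. For the inductive step, assume $(M(\mathbb{A}^t))_{uk}>0$ for all $u\in[n]$. Applying Equation~(\ref{eqm1}) with $j=k$ gives
$$(M(\mathbb{A}^{t+1}))_{uk}=\sum_{j_2,\cdots,j_m=1}^{n}a_{uj_2\cdots j_m}(M(\mathbb{A}^t))_{j_2k}\cdots(M(\mathbb{A}^t))_{j_mk}.$$
Every summand is nonnegative, since $\mathbb{A}$ is a nonnegative tensor and the inductive hypothesis makes each factor $(M(\mathbb{A}^t))_{j_\ell k}$ positive. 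By part (i) applied to the row $u$, there is at least one multi-index $j_2^*\cdots j_m^*$ with $a_{uj_2^*\cdots j_m^*}>0$, and the corresponding summand $a_{uj_2^*\cdots j_m^*}(M(\mathbb{A}^t))_{j_2^*k}\cdots(M(\mathbb{A}^t))_{j_m^*k}$ is then strictly positive. Therefore $(M(\mathbb{A}^{t+1}))_{uk}>0$ for every $u\in[n]$, which advances the induction and proves (ii).

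Neither part presents a serious obstacle, as the recursion does all the work; the one point requiring care is the nonnegativity of $\mathbb{A}$, which rules out cancellation in the sum so that a single positive witness term forces the entire expression to be positive. The crucial mechanism in (ii) is precisely that part (i) guarantees, for each row $u$, a positive entry of $\mathbb{A}$ which \emph{activates} the positivity already present in column $k$ at time $T$ and then propagates it forward indefinitely.
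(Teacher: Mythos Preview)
Your proof is correct and follows exactly the approach the paper indicates: the paper does not give a detailed argument for Corollary~\ref{Cor26}, stating only that both parts follow easily from Equation~(\ref{eqm1}), and your proposal fills in precisely those details, using the recursion together with nonnegativity and Remark~\ref{rem1} in the natural way.
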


\begin{proposition}\label{Pro27}
 Let $\mathbb{A}$ be a nonnegative  tensor with order $m$ and dimension $n$, and let  $a$ be a positive integer. Then $\mathbb{A}$ is primitive if and only if $\mathbb{A}^a$ is primitive.\end{proposition}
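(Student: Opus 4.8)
The plan is to prove both implications of the equivalence ``$\mathbb{A}$ is primitive $\iff$ $\mathbb{A}^a$ is primitive'' by exploiting the associative law for the general tensor product together with the characterization in Proposition \ref{Pro24}, namely that a nonnegative tensor is primitive exactly when some power of it is essentially positive. The key observation is that for any positive integer $r$ one has $(\mathbb{A}^a)^r = \mathbb{A}^{ar}$: the general product is associative by \cite{Sh12}, Theorem 1.1, so iterating $\mathbb{A}^a$ $r$ times is the same as iterating $\mathbb{A}$ $ar$ times. This single identity is what links the primitivity of the two tensors.

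For the forward direction, suppose $\mathbb{A}$ is primitive with primitive degree $\gamma = \gamma(\mathbb{A})$. By Proposition \ref{Pro24}, $\mathbb{A}^\gamma$ is essentially positive, i.e. $M(\mathbb{A}^\gamma)>0$ by Remark \ref{rem1}. I would then invoke Corollary \ref{Cor26}(ii): once $(M(\mathbb{A}^\gamma))_{uk}>0$ for all $u,k\in[n]$, the same holds for $(M(\mathbb{A}^t))_{uk}$ for every $t\ge\gamma$. Hence for any $r$ with $ar\ge\gamma$ (for instance $r=\gamma$) the tensor $\mathbb{A}^{ar}=(\mathbb{A}^a)^r$ is essentially positive, so by Proposition \ref{Pro24} again $\mathbb{A}^a$ is primitive. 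For the converse, suppose $\mathbb{A}^a$ is primitive; then $(\mathbb{A}^a)^r=\mathbb{A}^{ar}$ is essentially positive for some $r$, which immediately exhibits a power of $\mathbb{A}$ that is essentially positive, so $\mathbb{A}$ is primitive.

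The only subtlety — and the place I would be most careful — is the bookkeeping with the order of the tensors involved: $\mathbb{A}$ has order $m$, $\mathbb{A}^a$ has order $(m-1)^a+1$, and one must make sure that ``essentially positive'' and the majorization-matrix formalism behave consistently across these different orders. But Proposition \ref{Pro24} and Remark \ref{rem1} are stated for arbitrary nonnegative tensors, and the defining property ``$\mathbb{A}x>0$ for all nonnegative nonzero $x$'' (Definition \ref{defn220}) is order-agnostic, so this is a matter of citing the right statements rather than a genuine difficulty. In short, associativity collapses the problem to the remark that the set $\{ar : r\ge 1\}$ is cofinal in the set of exponents $\{t : \mathbb{A}^t \text{ essentially positive}\}$ precisely when the latter is nonempty and upward-closed, which is exactly what Corollary \ref{Cor26}(ii) guarantees.
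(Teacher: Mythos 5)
Your proof is correct. Note that the paper itself states Proposition \ref{Pro27} without any proof, so there is nothing to diverge from: your argument --- associativity giving $(\mathbb{A}^a)^r=\mathbb{A}^{ar}$, Proposition \ref{Pro24} to translate primitivity into essential positivity of some power, and the upward-closedness of ``$M(\mathbb{A}^t)>0$'' from Corollary \ref{Cor26}(ii) to reach an exponent divisible by $a$ --- is precisely the routine verification the authors evidently had in mind, and your handling of the differing orders of $\mathbb{A}$ and $\mathbb{A}^a$ is sound because Proposition \ref{Pro24} and Remark \ref{rem1} apply to nonnegative tensors of any order. The only cosmetic remark is that Corollary \ref{Cor26} is stated for primitive tensors, but in the forward direction $\mathbb{A}$ is primitive, so its use is legitimate; the converse direction, as you note, needs no such closure property at all.
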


Let $\mathbb{A}$ be a nonnegative primitive tensor with order $m$ and dimension $n$, for any $j=j_1\in[n]$, by Proposition \ref{Pro25}, there exists a sequence $j_1, j_2, \cdots, j_{s+1}$ such that $j_k\in[n], j_k\ne j_{k+1}, 1\le k\le s$ and $(M(\mathbb{A}))_{j_{k+1}j_k}>0$.

\begin{definition}\label{defn28}
Let $\mathbb{A}$ be a nonnegative  tensor with order $m$ and dimension $n$,
if  $j_k\in[n]$ for $ 1\le k\le t$ and $(M(\mathbb{A}))_{j_{k+1}j_k}>0$ for $1\le k\le t-1$,
we say that $j_1\to j_2\to \cdots \to j_t$ is a $walk$ of $length$ $t-1$ of $M(\mathbb{A})$;
if $j_i\not= j_k$ for any $i,k\in [t]$ with $i\not=k$, then we say the walk  $j_1\to j_2\to \cdots \to j_t$
is a path  of $M(\mathbb{A})$;  if $j_i\not= j_k$ for any $i,k\in [t-1]$ with $i\not=k$ but $j_1=j_t$,
then we say the walk  $j_1\to j_2\to \cdots \to j_t$ is a cycle  of $M(\mathbb{A})$. \end{definition}

\begin{lemma} \label{lem29} Let $\mathbb{A}$ be a nonnegative  tensor with order $m$ and dimension $n$. Suppose that $t>1$ and  $j_1\to j_2\to \cdots \to j_t$ is a $walk$ of $M(\mathbb{A})$ , then  $(M(\mathbb{A}^{t-1}))_{j_tj_1}>0$.\end{lemma}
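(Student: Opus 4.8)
The plan is to prove the claim by induction on the walk length $t-1$, using Equation~(\ref{eqm1}) as the engine that propagates positivity along the walk. First I would set up the base case $t=2$: a walk $j_1\to j_2$ means $(M(\mathbb{A}))_{j_2 j_1}>0$ by Definition~\ref{defn28}, which is exactly the statement $(M(\mathbb{A}^{1}))_{j_2 j_1}>0$, so there is nothing to prove. For the inductive step, suppose the claim holds for all walks of length $t-2$ (with $t>2$), and let $j_1\to j_2\to\cdots\to j_t$ be a walk of length $t-1$ in $M(\mathbb{A})$.

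The key observation is that $j_1\to j_2\to\cdots\to j_{t-1}$ is a walk of length $t-2$, so by the induction hypothesis $(M(\mathbb{A}^{t-2}))_{j_{t-1}j_1}>0$. Now I would apply Equation~(\ref{eqm1}) with $k=t-2$, $u=j_t$, and column index $j_1$:
\[
(M(\mathbb{A}^{t-1}))_{j_t j_1}=\sum_{i_2,\cdots,i_m=1}^{n} a_{j_t i_2\cdots i_m}\,(M(\mathbb{A}^{t-2}))_{i_2 j_1}\cdots (M(\mathbb{A}^{t-2}))_{i_m j_1}.
\]
Every term in this sum is nonnegative since $\mathbb{A}$ is nonnegative and hence every power $\mathbb{A}^{t-2}$ is nonnegative. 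To exhibit one strictly positive term, I would choose $i_2=i_3=\cdots=i_m=j_{t-1}$. Then the corresponding summand is $a_{j_t j_{t-1}\cdots j_{t-1}}\bigl((M(\mathbb{A}^{t-2}))_{j_{t-1}j_1}\bigr)^{m-1}$. By the definition of the majorization matrix, $a_{j_t j_{t-1}\cdots j_{t-1}}=(M(\mathbb{A}))_{j_t j_{t-1}}$, and this is positive because $j_{t-1}\to j_t$ is one of the edges of the walk (i.e. $(M(\mathbb{A}))_{j_{t}j_{t-1}}>0$). Combined with $(M(\mathbb{A}^{t-2}))_{j_{t-1}j_1}>0$ from the induction hypothesis, this single summand is strictly positive, and since all summands are nonnegative, $(M(\mathbb{A}^{t-1}))_{j_t j_1}>0$, completing the induction.

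I do not expect any serious obstacle here; the proof is essentially a bookkeeping exercise combining the recursion~(\ref{eqm1}) with the translation between entries of $\mathbb{A}$ along the ``diagonal'' $a_{i j\cdots j}$ and entries of $M(\mathbb{A})$. The one point requiring a little care is making sure the indices line up correctly when applying~(\ref{eqm1})—specifically that the walk must be split as (length $t-2$ prefix) followed by (one final edge), and that the final edge contributes the factor $a_{j_t j_{t-1}\cdots j_{t-1}}$ rather than something else. It is worth noting explicitly that we only need one positive term in the nonnegative sum, so we never need to track cancellation, which is what makes the argument clean. This lemma will presumably be used later to convert combinatorial facts about walks, paths, and cycles in $M(\mathbb{A})$ (Definition~\ref{defn28}) into positivity statements about powers of $\mathbb{A}$, en route to the bound $\gamma(\mathbb{A})\le (n-1)^2+1$ in Theorem~\ref{thmm}.
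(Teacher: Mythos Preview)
Your proposal is correct and follows essentially the same approach as the paper: induction on the walk length, using Equation~(\ref{eqm1}) and isolating the summand with $i_2=\cdots=i_m=j_{t-1}$ to exhibit a strictly positive term. The paper's proof is slightly terser but the structure and key computation are identical.
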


\begin{proof} We prove the assertion by induction on $t$.
Clearly, $t=2$ is obvious. Assume that the result holds for $t=k\ge2$, that is $(M(\mathbb{A}^{k-1}))_{j_kj_1}>0$. Since

$(M(\mathbb{A}^{k}))_{j_{k+1}j_1}=\sum\limits_{i_2,\cdots, i_m=1}^{n}a_{j_{k+1}i_2\cdots i_m}(M(\mathbb{A}^{k-1}))_{i_2j_1}\cdots(M(\mathbb{A}^{k-1}))_{i_mj_1}$

\hskip2.6cm $>a_{j_{k+1}j_k\cdots j_k}((M(\mathbb{A}^{k-1}))_{j_kj_1})^{m-1}$

\hskip2.6cm $=(M(A))_{j_{k+1}j_k}((M(\mathbb{A}^{k-1}))_{j_kj_1})^{m-1}>0,$

\noindent by  assumption, thus  the assertion holds for any integer $t>1$. We are done.\end{proof}

\begin{lemma}\label{le210} Let $\mathbb{A}$ be a nonnegative primitive tensor with order $m$ and dimension $n$. Then there exist an integer $j\in[n]$ and  an integer $l\in[n-1]$ such that  $(M(\mathbb{A}^l))_{jj}>0$.\end{lemma}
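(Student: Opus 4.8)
The plan is to show that $M(\mathbb{A})$ contains a cycle of length at most $n-1$; once such a cycle $j_{1}\to j_{2}\to\cdots\to j_{l}\to j_{1}$ with $l\in[n-1]$ is exhibited, applying Lemma \ref{lem29} to the walk $j_{1}\to\cdots\to j_{l}\to j_{1}$ of length $l$ yields $(M(\mathbb{A}^{l}))_{j_{1}j_{1}}>0$, which is exactly the assertion. Throughout I read $(M(\mathbb{A}))_{ij}>0$ as saying there is an arc $j\to i$, matching the walk terminology of Definition \ref{defn28}; one may assume $n\ge 2$, the case $n=1$ being degenerate.

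First I would dispose of the trivial case: if $(M(\mathbb{A}))_{jj}>0$ for some $j\in[n]$, then $j\to j$ is a cycle of length $1$ and we take $l=1$. So assume all diagonal entries of $M(\mathbb{A})$ vanish. By Proposition \ref{Pro25}(i) every vertex has an out-neighbour, necessarily distinct from itself since there are no loops, so any walk can be extended one step at a time; and by Proposition \ref{Pro25}(ii) there is a vertex $j^{*}$ with two distinct out-neighbours $u,v$, both different from $j^{*}$ by the no-loop assumption (in particular $n\ge 3$ in this situation). Now start a walk at $j^{*}$ through $u$, say $j^{*}=x_{0}\to x_{1}=u\to x_{2}\to\cdots$, and stop at the first index $k$ for which $x_{k}$ coincides with some earlier $x_{i}$, $i<k$. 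Then $x_{0},\dots,x_{k-1}$ are pairwise distinct, so $k\le n$, and $x_{i}\to x_{i+1}\to\cdots\to x_{k}=x_{i}$ is a cycle of length $k-i$. If $i\ge 1$, or if $i=0$ but $k\le n-1$, this length already lies in $[n-1]$ and Lemma \ref{lem29} finishes the proof.

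The only remaining possibility is $i=0$ and $k=n$, i.e.\ the walk immediately runs around a Hamiltonian cycle $j^{*}=x_{0}\to x_{1}=u\to\cdots\to x_{n-1}\to x_{0}=j^{*}$ through all $n$ vertices; here I would use the second out-neighbour $v$ of $j^{*}$. Since $\{x_{0},\dots,x_{n-1}\}=[n]$ and $v\notin\{j^{*},u\}=\{x_{0},x_{1}\}$, we have $v=x_{q}$ for some $q\in\{2,\dots,n-1\}$, and then $j^{*}\to x_{q}\to x_{q+1}\to\cdots\to x_{n-1}\to j^{*}$ is a cycle based at $j^{*}$ of length $n+1-q\le n-1$. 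Lemma \ref{lem29} then gives $(M(\mathbb{A}^{n+1-q}))_{j^{*}j^{*}}>0$, completing the argument. The one point requiring care is precisely this last step: the bare pigeonhole argument only produces a cycle of length at most $n$, and it is Proposition \ref{Pro25}(ii) — the existence of a vertex with two distinct out-neighbours — that lets us shortcut a potential Hamiltonian cycle down to length at most $n-1$; everything else is routine bookkeeping with Definition \ref{defn28} and Lemma \ref{lem29}.
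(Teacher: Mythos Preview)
Your proof is correct and follows essentially the same route as the paper: extend a walk using Proposition~\ref{Pro25}(i), use pigeonhole to extract a cycle, and in the borderline Hamiltonian case invoke Proposition~\ref{Pro25}(ii) to produce a chord that shortens the cycle to length at most $n-1$, then finish with Lemma~\ref{lem29}. The only difference is organizational: you start the walk at the vertex $j^{*}$ supplied by Proposition~\ref{Pro25}(ii), so in the Hamiltonian case the shortcut arc $j^{*}\to x_{q}$ is immediately based at $x_{0}$ and a single case suffices, whereas the paper starts the walk arbitrarily and then, after locating the special vertex $i=j_{t}$ on the Hamiltonian cycle, splits into three subcases according to the position of the extra out-neighbour $p=j_{s}$ relative to $j_{t}$.
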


\begin{proof} By Proposition \ref{Pro25}, we may assume that $j_1\to j_2\to\cdots\to j_{n+1}$ is a walk of length $n$ of $M(\mathbb{A})$.
Since $j_k\in[n], 1\le k\le n+1$, there exist at least two integers $u$ and $v$ such that $1\le u<v\le n+1$ and $j_u=j_v$.
It follows from Lemma \ref{lem29} that $(M(\mathbb{A}^{v-u}))_{j_uj_u}>0$.

{\bf Case 1: } There exist $u, v$ such that $(u, v)\ne (1, n+1)$.

Then $v-u\le n-1$ and we are done by taking $j=j_u$ and $l=v-u$.

{\bf Case 2: } $(u, v)= (1, n+1)$.

Then $j_1, j_2, \ldots, j_n$ is a permutation of $1, 2, \ldots, n$ and $j_{n+1}=j_1$.
By (ii) of Proposition \ref{Pro25}, there exist an integer $i\in[n]$ and two integers $p\ne q$ such that $1\le p, q\le n$, $(M(\mathbb{A}))_{pi}>0$ and $(M(\mathbb{A}))_{qi}>0$. Take $i=j_t$ for some $t\in [n]$, and assume that $p=j_s\ne j_{t+1}$. Thus $s\in [n]$, and $t\leq n-1$ when $s=1$.

{\bf Subcase 2.1: } $p\in \{j_1, \cdots, j_{t-1}\}$.

Then $1\leq s\leq t-1$ and

$$p=j_s\to \cdots j_{t-1}\to j_t\to p=j_s$$
is a cycle of $M(\mathbb{A})$ with length $t+1-s\le n-1$.  We take $j=p$ and $l=t+1-s$.

{\bf Subcase 2.2: } $p= j_{t}$.

Then $s=t$ and $j_t\to p(=j_s)$ is a cycle of $M(\mathbb{A})$ with length $1$.  We take $j=j_t$ and $l=1$.

{\bf Subcase 2.3: } $p\in \{j_{t+2}, \cdots, j_{n}\}$.

Then $t+2\leq s\leq n$ and

$$j_1\to j_2\to\cdots\to j_t\to j_s(=p)\to\cdots\to j_n\to j_1$$
is a cycle of $M(\mathbb{A})$ and $t+n-s+1= n-(s-t-1)\le n-1$. In this case we take $j=j_1$ and $l=n+t+1-s$. This proves the lemma.\end{proof}

\begin{remark}\label{rem211}
By the proof of Lemma \ref{le210}, if  $\mathbb{A}$ is a nonnegative primitive tensor with order $m$ and dimension $n$,
then there exist an integer $t$ with $1\leq t\leq n-1$ and some integers
$j_1, j_2, \cdots, j_t\in [n]$ such that $j_1\to j_2\to \cdots\to j_t\to j_1$ is a cycle of length $t$ of $M(\mathbb{A})$,
 and for any $k\in [t]$, $(M(\mathbb{A}^t))_{j_kj_k}>0$.
\end{remark}

Note that by (\ref{eqm1}), (ii) of  Corollary \ref{Cor26} also holds when $\mathbb{A}$ is a nonnegative  tensor with order $m$ and dimension $n$. Therefore it makes sense to consider the primitive degree of some column of a tensor.

\begin{definition}\label{defn212}
 Let $\mathbb{A}$ be a nonnegative  tensor with order $m$ and dimension $n$. For a fixed integer $j\in[n]$, if there exists a positive integer $T$ such that
$$(M(\mathbb{A}^T))_{uj}>0, \hskip.2cm {\mbox for\,\, all } \, u, 1\le u\le n,$$
then $\mathbb{A}$ is called $j$-primitive and the smallest such integer $T$ is called the $j$-primitive degree of $\mathbb{A}$, denoted by $\gamma_j(\mathbb{A})$. \end{definition}

By Corollary \ref{Cor26} and the above definition, we have the following result.
\begin{proposition}\label{Pro213}
Let $\mathbb{A}$ be a nonnegative primitive  tensor with order $m$ and dimension $n$. Then
$$\gamma(\mathbb{A})=\max_{1\le j\le n}\{\gamma_j(\mathbb{A})\}.$$\end{proposition}

\section{Proof of the main results}

\hskip.6cm In this section, we will prove Theorem \ref{thmm}. We first prove the following special case of the theorem.

\begin{theorem}\label{thm31} Let $\mathbb{A}$ be a nonnegative primitive  tensor with order $m$ and dimension $n$. Suppose that  there is an integer $j\in[n]$ with
$(M(\mathbb{A}))_{jj}>0$, then $\gamma_j(\mathbb{A})\le n-1$. \end{theorem}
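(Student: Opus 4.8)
The plan is to exploit the self-loop at $j$, i.e. $(M(\mathbb{A}))_{jj}>0$, to "absorb" extra length in walks and thereby show that every index $u$ reaches $j$ via a walk of length exactly $n-1$ in $M(\mathbb{A})$. By Lemma \ref{lem29}, a walk of length $t-1$ from $u$ to $j$ yields $(M(\mathbb{A}^{t-1}))_{uj}>0$, so it suffices to produce, for each $u\in[n]$, a walk $u=j_1\to j_2\to\cdots\to j_n=j$ of length exactly $n-1$ terminating at $j$. Once such walks exist for all $u$, Definition \ref{defn212} gives $\gamma_j(\mathbb{A})\le n-1$.

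First I would establish, for each fixed $u\in[n]$, the existence of \emph{some} walk from $u$ to $j$ in $M(\mathbb{A})$. This should follow from primitivity: by Remark \ref{rem1} there is an $r$ with $M(\mathbb{A}^r)>0$, and by Equation (\ref{eqm1}) a positive entry $(M(\mathbb{A}^r))_{uj}>0$ forces a walk of length $r$ from $u$ to $j$ (the standard expansion of the product picks out a chain of positive majorization entries). So a walk exists; take a \emph{shortest} one. A shortest walk from $u$ to $j$ has no repeated vertices (a repeat could be excised), hence is a path, and as a path on $n$ vertices it has length at most $n-1$.

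Next, the key step: pad this path up to length exactly $n-1$ using the self-loop at $j$. If the shortest path from $u$ to $j$ has length $\ell\le n-1$, then appending the loop $j\to j$ a total of $n-1-\ell$ times produces a walk $u\to\cdots\to j\to j\to\cdots\to j$ of length exactly $n-1$ ending at $j$, because $(M(\mathbb{A}))_{jj}>0$. Applying Lemma \ref{lem29} to this length-$(n-1)$ walk gives $(M(\mathbb{A}^{n-1}))_{uj}>0$. Since $u$ was arbitrary, $(M(\mathbb{A}^{n-1}))_{uj}>0$ for all $u\in[n]$, so $\gamma_j(\mathbb{A})\le n-1$.

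The only real point requiring care — the main obstacle — is the very first reduction: making sure a walk from $u$ to $j$ actually exists for \emph{every} $u$, and that a shortest such walk is genuinely a path of length $\le n-1$. The existence uses that $\mathbb{A}$ is primitive (via $M(\mathbb{A}^r)>0$ and the product formula (\ref{eqm1})), and the "shortest walk is a path" argument is the usual pigeonhole: if some vertex repeats, say $j_a=j_b$ with $a<b$, delete the segment between them to get a strictly shorter walk with the same endpoints, contradicting minimality; a repeat-free walk visits at most $n$ vertices, hence has length $\le n-1$. Everything after that is just the loop-padding trick plus Lemma \ref{lem29}, which are routine.
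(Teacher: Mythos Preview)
There is a genuine gap, and it lies exactly where you flagged the ``only real point requiring care'': the existence of a walk in $M(\mathbb{A})$ from (the appropriate endpoint) to the other, deduced from $(M(\mathbb{A}^r))_{uj}>0$. For matrices this inference is valid, but for tensors of order $m\ge 3$ it is false. Unwinding Equation~(\ref{eqm1}) once gives
\[
(M(\mathbb{A}^{r}))_{uj}=\sum_{j_2,\dots,j_m} a_{u j_2\cdots j_m}\,(M(\mathbb{A}^{r-1}))_{j_2 j}\cdots (M(\mathbb{A}^{r-1}))_{j_m j},
\]
so positivity only produces a \emph{tensor} entry $a_{u j_2\cdots j_m}>0$, not a \emph{majorization} entry $a_{ukk\cdots k}=(M(\mathbb{A}))_{uk}$ unless $j_2=\cdots=j_m$. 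Hence you cannot extract a ``chain of positive majorization entries''. Concretely, take $m=3$, $n=3$ with nonzero entries $a_{123}=1$ and $a_{ikk}=1$ for $i\in\{2,3\}$, $k\in\{1,2,3\}$. Then row~$1$ of $M(\mathbb{A})$ is identically zero, so \emph{no} walk of $M(\mathbb{A})$ terminates at vertex~$1$; yet $(M(\mathbb{A}^2))_{1j}=(M(\mathbb{A}))_{2j}(M(\mathbb{A}))_{3j}=1$ for all $j$, and one checks $M(\mathbb{A}^2)>0$, so $\mathbb{A}$ is primitive. Taking $j=2$ (which has the required loop $(M(\mathbb{A}))_{22}>0$), your argument would need a walk in $M(\mathbb{A})$ ending at $u=1$, and none exists. (There is also an orientation slip: by Lemma~\ref{lem29}, a walk $u=j_1\to\cdots\to j_n=j$ yields $(M(\mathbb{A}^{n-1}))_{ju}$, not $(M(\mathbb{A}^{n-1}))_{uj}$; fixing this to walks $j\to\cdots\to u$ only makes the missing-walk problem the relevant one.)

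The paper sidesteps this entirely by working directly with the sets $S_k=\{u:(M(\mathbb{A}^k))_{uj}>0\}$ rather than with walks in $M(\mathbb{A})$: the loop at $j$ gives $S_k\subseteq S_{k+1}$, stabilisation $S_l=S_{l+1}$ forces $S_l=S_{l+1}=\cdots$ via (\ref{eqm1}), and primitivity then gives $S_l=[n]$; since $|S_1|\ge 2$ the strict chain has length at most $n-1$. This argument never assumes that positivity in $M(\mathbb{A}^r)$ comes from walks in $M(\mathbb{A})$, which is precisely the step that fails for higher-order tensors.
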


\begin{proof} Put
$$S_k=\{u\in[n], (M(\mathbb{A}^k))_{uj}\}>0, k=1, 2, \ldots.$$
Then $j\in S_1$ and there exists an integer $v\in[n]\setminus\{j\}$ such that $v\in S_1$ by (i) of Proposition \ref{Pro25}. Thus $|S_1|\geq 2$.

 Since $$(M(\mathbb{A}^{k+1}))_{uj}=\sum\limits_{i_2,\cdots, i_s=1}^{n}(\mathbb{A}^k)_{ui_2\cdots i_s}(M(\mathbb{A}))_{i_2j}\cdots(M(\mathbb{A}))_{i_sj}\ge (M(\mathbb{A}^k))_{uj}(M(\mathbb{A}))_{jj}^{s-1}, $$

\noindent we see that if $u\in S_k$, then $u\in S_{k+1}$. Therefore by induction on $k$, we can obtain that
$S_1\subseteq S_2 \subseteq \cdots \subseteq S_l \subseteq S_{l+1}\subseteq\cdots$.

Note that $S_k\subseteq[n]$ for any $k=1, 2, \cdots$, and $S_1\subseteq S_2 \subseteq \cdots \subseteq S_l \subseteq S_{l+1}\subseteq\cdots$,
 hence the sequence $S_1, S_2, \ldots, S_l, \ldots$  eventually  terminates.
 Let $l$ be the smallest positive integer such that $S_l=S_{l+1}$,  by (\ref{eqm1}) for $k=l, l+1$, we have
$$(M(\mathbb{A}^{l+1}))_{uj}=\sum\limits_{j_2,\cdots, j_m=1}^{n}a_{uj_2\cdots j_m}(M(\mathbb{A}^l))_{j_2j}\cdots(M(\mathbb{A}^l))_{j_mj}$$ and
$$(M(\mathbb{A}^{l+2}))_{uj}=\sum\limits_{j_2,\cdots, j_m=1}^{n}a_{uj_2\cdots j_m}(M(\mathbb{A}^{l+1}))_{j_2j}\cdots(M(\mathbb{A}^{l+1}))_{j_mj}.$$
It follows that $S_{l+1}=S_{l+2}=\cdots=S_k$ for all $k\ge l$.
Since $\mathbb{A}$ is a nonnegative primitive  tensor, hence $S_l=S_{\gamma(\mathbb{A})}=[n]$.

Now by the above argumnets and  the definition of $l$, we have
$$S_1\subsetneqq S_2\subsetneqq\cdots\subsetneqq S_{l-1}\subsetneqq S_l,$$ and thus $2\le |S_1|<|S_2|<\cdots<|S_l|=n$.
It follows that $l\le n-1$ and $\gamma_j(\mathbb{A})\le n-1$. 
\end{proof}

We also need the following lemmas.

\begin{lemma} \label{lem32} Let $\mathbb{A}$ be a nonnegative primitive  tensor with order $m$ and dimension $n$. Let $H=\{i_1, i_2, \ldots, i_s\}$ be the set of all elements $i\in[n]$ such that $i=k_1\to k_2\to\cdots\to k_t\to k_1=i$ is a cycle with length $t$ of $M(\mathbb{A})$ for some $t$ where $1\le t\le n-1$. Suppose there exists  a positive integer $j$  with $j\in [n]\setminus H$,  then  $1\le s\le n-1$ and there exist positive integers $i$ and $l$ such that $i\in H$, $1\le l\le n-s$ and $(M(\mathbb{A}^l))_{ij}>0$.\end{lemma}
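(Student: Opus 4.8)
The plan is to manufacture a short walk in $M(\mathbb{A})$ from $j$ into $H$ and then convert it into a positive entry of a power of $\mathbb{A}$ by Lemma~\ref{lem29}. First I would dispose of the two bounds on $s$: since $\mathbb{A}$ is primitive, Remark~\ref{rem211} supplies a cycle of $M(\mathbb{A})$ of length at most $n-1$, so $H$ is nonempty and $s\ge 1$; and the hypothesis $j\in[n]\setminus H$ forces $H$ to be a proper subset of $[n]$, so $s\le n-1$.

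Next I would build a walk greedily. Starting from $j_1:=j$, Proposition~\ref{Pro25}(i) guarantees for every already-chosen vertex $j_k$ an index $j_{k+1}\in[n]\setminus\{j_k\}$ with $(M(\mathbb{A}))_{j_{k+1}j_k}>0$, so a walk $j_1\to j_2\to\cdots$ can always be extended. Consider its first $n-s+1$ vertices $j_1,\ldots,j_{n-s+1}$. I claim at least one of $j_2,\ldots,j_{n-s+1}$ lies in $H$. Suppose not; since also $j_1=j\notin H$, all of $j_1,\ldots,j_{n-s+1}$ lie in $[n]\setminus H$, a set of cardinality $n-s$, so by pigeonhole some vertex is repeated. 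Taking $b$ minimal in $\{2,\ldots,n-s+1\}$ with $j_b\in\{j_1,\ldots,j_{b-1}\}$ and $a<b$ with $j_a=j_b$, minimality makes $j_1,\ldots,j_{b-1}$ pairwise distinct, so $j_a\to j_{a+1}\to\cdots\to j_b=j_a$ is a cycle of $M(\mathbb{A})$ of length $b-a\le b-1\le n-s\le n-1$; hence $j_a\in H$, contradicting $j_a\in[n]\setminus H$.

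To conclude, I would let $l+1$ be the least index in $\{2,\ldots,n-s+1\}$ with $i:=j_{l+1}\in H$; then $1\le l\le n-s$, the walk $j=j_1\to\cdots\to j_{l+1}=i$ has length $l$, and Lemma~\ref{lem29} yields $(M(\mathbb{A}^l))_{ij}>0$, which is exactly what is wanted.

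The only substantive point --- and the step I would be most careful about --- is the pigeonhole argument: one must observe that a vertex repeated \emph{within the truncated walk of length $n-s$} produces a cycle short enough (length at most $n-s\le n-1$) to land that vertex in $H$, and this is precisely the contradiction that pins the walk down. Everything else is routine bookkeeping with Proposition~\ref{Pro25}(i), Remark~\ref{rem211}, and Lemma~\ref{lem29}.
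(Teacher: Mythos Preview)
Your argument is correct and follows essentially the same route as the paper: build a walk of length $n-s$ out of $j$ via Proposition~\ref{Pro25}(i), and use pigeonhole on $[n]\setminus H$ to force a short cycle (hence a contradiction) if the walk never enters $H$, then invoke Lemma~\ref{lem29}. If anything, your version is a touch more careful---you explicitly take the minimal repeat to guarantee a genuine cycle, and you exclude the trivial case $l=0$ by starting the search at $j_2$---but the idea is identical.
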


\begin{proof}
It is obvious that $s\le n-1$,  and $s\geq 1$ by Lemma \ref{le210}.
By Proposition \ref{Pro25}, we have the following walk of length $n-s$ of $M(\mathbb{A})$ starting with  $j$:
$$j=j_1\to j_2\to\cdots\to j_{n-s+1}.$$
If there exists an integer $w,  1\le w\le n-s+1$ such that $j_w\in H$, then $(M(\mathbb{A}^{w-1}))_{j_wj}>0$ by Lemma \ref{lem29}, and we are done. Otherwise, we have
$$\{j_1, j_2, \cdots, j_{n-s+1}\}\cap H=\emptyset.$$
Since $n-s+1+|H|=n+1$, so there exist two positive integers $u$ and $v$ such that $1\le u<v\le n-s+1$, $j_u=j_v$,
and $j_u\to j_{u+1}\to \cdots\to j_{v-1}\to j_v=j_u$ is a cycle with length $v-u\leq n-s\le n-1$ of $M(\mathbb{A})$.
 It follows that $j_u\in H$, a contradiction. This proves the lemma.\end{proof}

Let $Z(\mathbb{A})$ be the tensor obtained by replacing all the nonzero entries of $\mathbb{A}$ by one. Then $Z(\mathbb{A})$ is called the zero-nonzero pattern (or simply the zero pattern) of $\mathbb{A}$. Let $a$ be a complex number, we define $Z(a)=1$ if $a\not=0$ and $Z(a)=0$ if $a=0$.

\begin{lemma}\label{lem33}
Let $\mathbb{A}$ be a nonnegative  tensor with order $m$ and dimension $n$ such that
  $a_{ii_2\cdots i_m}=0$ if $i_2\cdots i_m\not=i_2\cdots i_2$  for any $i\in [n]$.
  Then for any positive $r$, $Z(M(\mathbb{A}^r))=Z((M(\mathbb{A}))^r).$
\end{lemma}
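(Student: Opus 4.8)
The plan is to prove the identity by induction on $r$, using Equation~(\ref{eqm1}) at each step to collapse the defining sum down to an ordinary matrix product of $M(\mathbb{A})$, and then using nonnegativity to pass to zero patterns without cancellation.

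First, for $r=1$ there is nothing to prove, since $\mathbb{A}^1=\mathbb{A}$. For the inductive step I would assume $Z(M(\mathbb{A}^r))=Z((M(\mathbb{A}))^r)$ and apply~(\ref{eqm1}) with $k=r$. The hypothesis that $a_{i i_2\cdots i_m}=0$ unless $i_2=i_3=\cdots=i_m$ kills every term of the sum except those with $j_2=\cdots=j_m=:i$, and for such a term $a_{uii\cdots i}=(M(\mathbb{A}))_{ui}$. This yields, for all $u,j\in[n]$,
\[
(M(\mathbb{A}^{r+1}))_{uj}=\sum_{i=1}^{n}(M(\mathbb{A}))_{ui}\,\big((M(\mathbb{A}^r))_{ij}\big)^{m-1}.
\]

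Next I would observe that every power $\mathbb{A}^k$ is nonnegative, since the general product of nonnegative tensors is nonnegative; hence all entries appearing above are nonnegative and the right-hand side vanishes precisely when every summand does. Therefore $(M(\mathbb{A}^{r+1}))_{uj}>0$ if and only if there is some $i\in[n]$ with $(M(\mathbb{A}))_{ui}>0$ and $(M(\mathbb{A}^r))_{ij}>0$ (the exponent $m-1\ge 1$ does not affect the zero pattern). By the induction hypothesis this is equivalent to the existence of $i$ with $(M(\mathbb{A}))_{ui}>0$ and $((M(\mathbb{A}))^r)_{ij}>0$, and — since $M(\mathbb{A})$ is itself nonnegative, so that $((M(\mathbb{A}))^{r+1})_{uj}=\sum_{i=1}^{n}(M(\mathbb{A}))_{ui}((M(\mathbb{A}))^r)_{ij}$ has no cancellation — this is in turn equivalent to $((M(\mathbb{A}))^{r+1})_{uj}>0$. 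Reading this off entrywise gives $Z(M(\mathbb{A}^{r+1}))=Z((M(\mathbb{A}))^{r+1})$, which closes the induction.

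I do not expect a genuine obstacle here: the content is a short induction, and the only point needing care is the repeated appeal to nonnegativity — of the powers $\mathbb{A}^k$ and of the matrix $M(\mathbb{A})$ — which guarantees that the sums and products of entries vanish exactly when forced to, so that the zero-pattern relations are equalities rather than one-sided inclusions. Conceptually the lemma records that when all the nonzero entries of $\mathbb{A}$ sit in the ``diagonal'' slots $i\,jj\cdots j$, forming tensor powers reproduces, at the level of zero patterns, the ordinary matrix powers of $M(\mathbb{A})$.
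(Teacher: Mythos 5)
Your proposal is correct and follows essentially the same route as the paper: induction on $r$, collapsing the sum in Equation~(\ref{eqm1}) via the hypothesis $a_{ii_2\cdots i_m}=0$ unless $i_2=\cdots=i_m$, and using nonnegativity (no cancellation, and the exponent $m-1$ not affecting the zero pattern) to identify the result with the zero pattern of the matrix power; the paper merely phrases these steps as a chain of $Z[\cdot]$ identities rather than as positivity equivalences.
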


\begin{proof}
We show the result by induction on $r$. Clearly, $r=1$ is obvious.

Assume that the assertion holds for $r=k\geq 1$, then for  any $i,j\in [n]$,

$Z[(M(\mathbb{A}^{k+1}))_{ij}]=Z[\sum\limits_{i_2,\cdots, i_m=1}^{n}a_{ii_2\cdots i_m}(M(\mathbb{A}^k))_{i_2j}\cdots(M(\mathbb{A}^k))_{i_mj}]$

\hskip2.9cm $=Z[\sum\limits_{i_2=1}^{n}(M(\mathbb{A}))_{ii_2}((M(\mathbb{A}^k))_{i_2j})^{m-1}]$

\hskip2.9cm $=Z[\sum\limits_{i_2=1}^{n}(M(\mathbb{A}))_{ii_2}(M(\mathbb{A}^k))_{i_2j}]$

\hskip2.9cm $=Z[\sum\limits_{i_2=1}^{n}Z[(M(\mathbb{A}))_{ii_2}]Z[(M(\mathbb{A}^k))_{i_2j}]]$

\hskip2.9cm $=Z[\sum\limits_{i_2=1}^{n}Z[(M(\mathbb{A}))_{ii_2}]Z[((M(\mathbb{A}))^k)_{i_2j}]]$

\hskip2.9cm $=Z[\sum\limits_{i_2=1}^{n}Z[(M(\mathbb{A}))_{ii_2}((M(\mathbb{A}))^k)_{i_2j}]]$

\hskip2.9cm $=Z[((M(\mathbb{A}))^{k+1})_{ij}]$

Thus $Z(M(\mathbb{A}^{k+1}))=Z((M(\mathbb{A}))^{k+1}).$
\end{proof}

\begin{corollary}\label{cor34}
Let $\mathbb{A}$ be a nonnegative primitive  tensor with order $m$ and dimension $n$ such that
  $a_{ii_2\cdots i_m}=0$ if $i_2\cdots i_m\not=i_2\cdots i_2$  for any $i\in [n]$.
  If $M(\mathbb{A})$ is primitive, then
   $\gamma(\mathbb{A})=\gamma(M(\mathbb{A})).$
\end{corollary}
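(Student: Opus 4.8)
The plan is to reduce the whole statement to the zero-pattern identity already furnished by Lemma \ref{lem33}, together with the standard dictionary between "essentially positive" and "positive majorization matrix". First I would record that, by condition (1) of Proposition \ref{Pro21} applied to Definition \ref{defn22}, a nonnegative tensor $\mathbb{B}$ of order $m$ and dimension $n$ is essentially positive exactly when $M(\mathbb{B})>0$. Applying this with $\mathbb{B}=\mathbb{A}^r$ and combining it with Remark \ref{rem1} and Proposition \ref{Pro24}, one gets the clean description
\[
\gamma(\mathbb{A})=\min\{r\ge 1:\ M(\mathbb{A}^r)>0\}=\min\{r\ge 1:\ Z(M(\mathbb{A}^r))=J_n\},
\]
where $J_n$ denotes the $n\times n$ all-ones matrix; the set on the right is nonempty precisely because $\mathbb{A}$ is primitive.

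On the matrix side I would invoke the classical characterization of primitivity: for a nonnegative matrix $P$ of order $n$, the primitive degree is $\gamma(P)=\min\{r\ge 1:\ P^r>0\}=\min\{r\ge 1:\ Z(P^r)=J_n\}$, and since $M(\mathbb{A})$ is assumed primitive this set is also nonempty. Thus both $\gamma(\mathbb{A})$ and $\gamma(M(\mathbb{A}))$ are well defined and are expressed as minima of two explicitly given sets of positive integers.

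The final step is to compare these two sets. By Lemma \ref{lem33}, the hypothesis that $a_{ii_2\cdots i_m}=0$ whenever $i_2\cdots i_m\neq i_2\cdots i_2$ gives $Z(M(\mathbb{A}^r))=Z((M(\mathbb{A}))^r)$ for every $r\ge 1$; hence
\[
\{r\ge 1:\ Z(M(\mathbb{A}^r))=J_n\}=\{r\ge 1:\ Z((M(\mathbb{A}))^r)=J_n\}.
\]
Taking minima of both sides yields $\gamma(\mathbb{A})=\gamma(M(\mathbb{A}))$. Equivalently, and perhaps more transparently, one can argue pointwise in $r$: for each fixed $r$ one has $\mathbb{A}^r$ essentially positive $\iff M(\mathbb{A}^r)>0 \iff (M(\mathbb{A}))^r>0$, so the smallest $r$ for which the first holds is the smallest $r$ for which the last holds.

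I do not expect a genuine obstacle: all the combinatorial work is already absorbed into Lemma \ref{lem33}, and the only care needed is in the bookkeeping that lets Proposition \ref{Pro24} be read off the majorization matrix (the identification "$\mathbb{A}^r$ essentially positive $\Leftrightarrow M(\mathbb{A}^r)>0$") and in noting that the analogous facts for primitive matrices are standard. If anything deserves a sentence of justification, it is that these two characterizations of primitive degree — tensorial and matricial — are exactly parallel, so that the equality of zero patterns for every power forces the equality of the two primitive degrees.
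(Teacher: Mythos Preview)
Your proposal is correct and is exactly the intended argument: the paper states the result as an immediate corollary of Lemma~\ref{lem33} without further proof, and what you have written is precisely the one-line unpacking of that implication via Remark~\ref{rem1}/Proposition~\ref{Pro24} on the tensor side and the classical primitive-matrix criterion on the matrix side.
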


\noindent {\bf Proof of Theorem \ref{thmm}:} Let $H=\{i_1, i_2, \ldots, i_s\}$ be the set of all elements $i\in[n]$ such that $i=k_1\to k_2\to\cdots\to k_t\to k_1=i$ is a cycle with length $t$ of $M(\mathbb{A})$ and $1\le t\le n-1$.

{\bf Case 1: } $s=n$.

Then for any $j\in [n]$, there exists an integer, say,  $t_j$, such that there exists a cycle $j=k_1\to k_2\to \cdots k_{t_j}\to k_1=j$ with length $t_j$ where  $t_j\in [n-1]$,
so $(M(\mathbb{A}^{t_j}))_{jj}>0$. Note that $\mathbb{A}^{t_j}$ is primitive by Proposition \ref{Pro27} and $\mathbb{A}$ is primitive,
we have $\gamma_j(\mathbb{A}^{t_j})\leq n-1$ by Theorem \ref{thm31}. Hence
$$\gamma_j(\mathbb{A})\le t_j\gamma_j(\mathbb{A}^{t_j})\le (n-1)^2.$$
Thus $\gamma(\mathbb{A})\le \max_{j\in [n]}\{\gamma_j(\mathbb{A})\}\le (n-1)^2$ by Proposition \ref{Pro213}.

{\bf Case 2: } $1\leq s \leq n-1$.

Then there exists at least an integer $j\in [n]\backslash H$.

{\bf Subcase 2.1: } $j\in H$.

Similar to  Case 1, for any $j\in H$, there exists an integer, say,  $t_j$, such that there exists a cycle $j=k_1\to k_2\to \cdots k_{t_j}\to k_1=j$ with length $ t_j$ where $t_j\leq s$, so $(M(\mathbb{A}^{t_j}))_{jj}>0$.
Note that $\mathbb{A}^{t_j}$ is primitive by Proposition \ref{Pro27} and $\mathbb{A}$ is primitive,
we have $\gamma_j(\mathbb{A}^{t_j})\leq n-1$ by Theorem \ref{thm31}. Hence
$$\gamma_j(\mathbb{A})\le t_j\gamma_j(\mathbb{A}^{t_j})\le t_j(n-1)\le s(n-1).$$
Let $T=\max_{j\in H}\{\gamma_j(\mathbb{A})\}$, then $T\le s(n-1)$ and $(M(\mathbb{A}^T))_{uj}>0$ for any $j\in H$ and all $u\in[n]$.

{\bf Subcase 2.2: } $j\in [n]\backslash H$.

By Lemma \ref{lem32}, there exist positive integers $i$ and $l$ such that $i\in H$, $1\le l\le n-s$ and $(M(\mathbb{A}^l))_{ij}>0$.  For any $u\in[n]$, since
$$(M(\mathbb{A}^{T+l}))_{uj}=\sum\limits_{j_2,\cdots, j_s=1}^{n}(\mathbb{A}^T)_{uj_2\cdots j_s}(M(\mathbb{A}^l))_{j_2j}\cdots(M(\mathbb{A}^l))_{j_sj}
\ge (M(\mathbb{A}^T))_{ui}((M(\mathbb{A}^l))_{ij})^{s-1}>0, $$
 then $\gamma_j(\mathbb{A})\le T+l\le s(n-1)+n-s\le (n-1)^2+1$.

 Thus combing Subcase 2.1 and Subcase 2.2, we have $\gamma(\mathbb{A})\le \max_{j\in [n]}\{\gamma_j(\mathbb{A})\}\le (n-1)^2+1$ by Proposition \ref{Pro213}.

Combing the above arguments, $\gamma(\mathbb{A})\le \max\{(n-1)^2, (n-1)^2+1\}= (n-1)^2+1$.

Let $M_1=\left(
                 \begin{array}{ccccc}
                   0 & 0  & \cdots & 1 & 1 \\
                   1 & 0   & \cdots &  0&  0\\
                    0 & 1  & \cdots &  0& 0  \\
                    0 & 0 & \ddots &  0&0  \\
                   0 &0 & \cdots & 1 & 0 \\
                 \end{array}
               \right)
$. It is well known that $M_1$ is primitive, and $\gamma(M_1)=(n-1)^2+1$.

Let  $\mathbb{A}$ be a nonnegative primitive  tensor with order $m$ and dimension $n$ such that
  $a_{ii_2\cdots i_m}=0$ if $i_2\cdots i_m\not=i_2\cdots i_2$  for any $i\in [n]$, and $M(\mathbb{A})=M_1$.
  Then by Corollary \ref{cor34}, we have $\gamma(\mathbb{A})=(n-1)^2+1.$
\qed

\begin{remark}\label{rem35}
It is well known $\gamma(\mathbb{A})\leq (n-1)^2+1$ for $m=2$ ($\mathbb{A}$ is a matrix). It implies that the upper bound on the primitive degree of primitive tensors
and the upper bound on the primitive index of primitive matrices are coincident.
\end{remark}


\end{document}